\numberwithin{theorem}{subsection}
\numberwithin{corollary}{section}
\numberwithin{definition}{subsection}
\begin{document}
\title{On Topological Properties of Planar Octahedron Networks}
\author{Haidar Ali, Farzana Kousar}

\institute{Department of Mathematics,
Government College University,\\
Faisalabad, Pakistan\\
E-mail: \{haidarali@gcuf.edu.pk\}\\
$^*$Correspondence: haidarali@gcuf.edu.pk}
\authorrunning{Haidar Ali, Farzana Kousar}
\titlerunning{On Topological Properties of Planar Octahedron Networks} \maketitle

\markboth{\small{Haidar Ali}} {\small{On Topological Properties of Planar Octahedron Networks}}

\begin{abstract}
Topological indices are scientific details of graphs which represents its topology and of the most part graph invariant.  In QSAR/QSPR, physico-chemical characteristics and topological indices, for example, atom bond connectivity $(ABC)$ and geometric-arithmetic  $(GA)$ indices are apply to foresee the bioactivity of concoction mixes. Graph theory discovered a significant practice in the region of investigation. In this paper, we are taking Planar Octahedron networks, produced by honeycomb structure of dimension $n$ and obtain analytical closed results of Multiplicative topological indices for  firstly and presents closed formulas of  degree based indices.

\end{abstract}
\par
\centerline{{\bf  Mathematics Subject Classification: 05C12, 05C90}}
\vspace{.3cm} {\bf Keywords:}  Topological indices, Planar Octahedron network, Triangular Prism network, Dominating Planar Octahedron network,$R_{\alpha}$, $M _{1}$, $ABC$, $GA$, $ABC_{4}$, $GA_{5}$.

\section[Introduction and preliminary results]{Introduction and preliminary results}
Topological indices are valuable instruments that are given by graph theory. Particles and atomic mixes are frequently demonstrated by sub-atomic graph. In perspective on graph hypothesis, a sub-atomic chart depicts the auxiliary recipe of synthetic compound by changing over molecules by vertices and concoction bonds by edges.  The blend of chemical science, arithmetic and data science is a cheminformatics. It predicts the property of chemical compound by using Quantitative structure-activity (QSAR)and structure property (QSPR) relationships. The topological indices are used to predict chemical compound bioactivity.

A representation of numbers, polynomials and matrices are representations of a graph. Graph has its own characteristics which can be determined by topological indices and the topology of graph remains unchanged under automorphism of graph. In the different classes of indices, degree based topological indices are of extraordinary significance and assume an essential job in substance chart hypothesis and especially in science. In increasingly exact manner, a topological index $Top(H)$ of a graph, is a number with the property that for each graph $G$ isomorphic to $H$, $Top(H) = Top(G)$. The idea of topological index originated from Wiener {\cite{Wiener}} while he was dealing with boiling point of paraffin, named this record as \emph{path number}. Later on, renamed as \emph{Wiener index} \cite{Deza}.\\\\
\textbf{Drawing Algorithm for Planar Octahedron (POH) Networks}\\
Step 1: Construct a Silicate network of dimension n.\\
Step 2: Fix new vertices at the centroid of each triangular face and connect it
to oxide vertices in the corresponding triangle face.\\
Step 3: Connect all these new centroid vertices which lie in the same silicate
cell.\\
Step 4: Remove all Silicon vertices. The resulting graph is called planar
Octahedron network of dimension n.\\
Also we can derive Triangular Prism network $TP$ and Dominating Planar Octahedron network $DPOH$.\\

\begin{figure}[h]
\centering
\includegraphics[width=7cm]{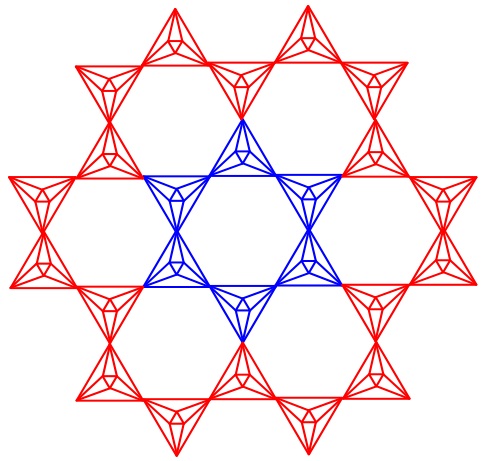}\\
\caption{Planar Octahedron network $POH(2)$.}\label{fig 1}
\end{figure}
\unskip
\begin{figure}[h]
\centering
\includegraphics[width=10cm]{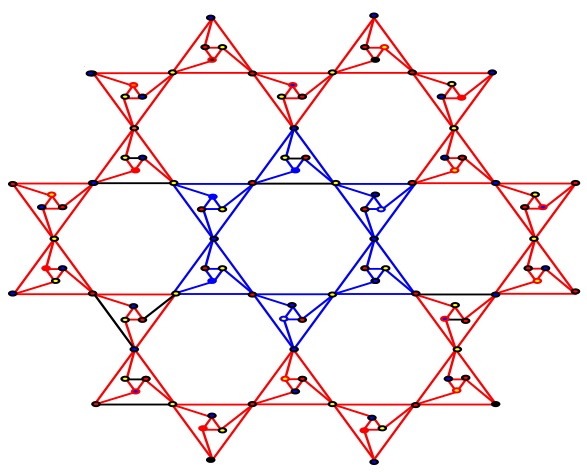}\\
\caption{Triangular Prism network $TP(2)$.}\label{fig 2}
\end{figure}
\unskip
\begin{figure}[h]
\centering
\includegraphics[width=10cm]{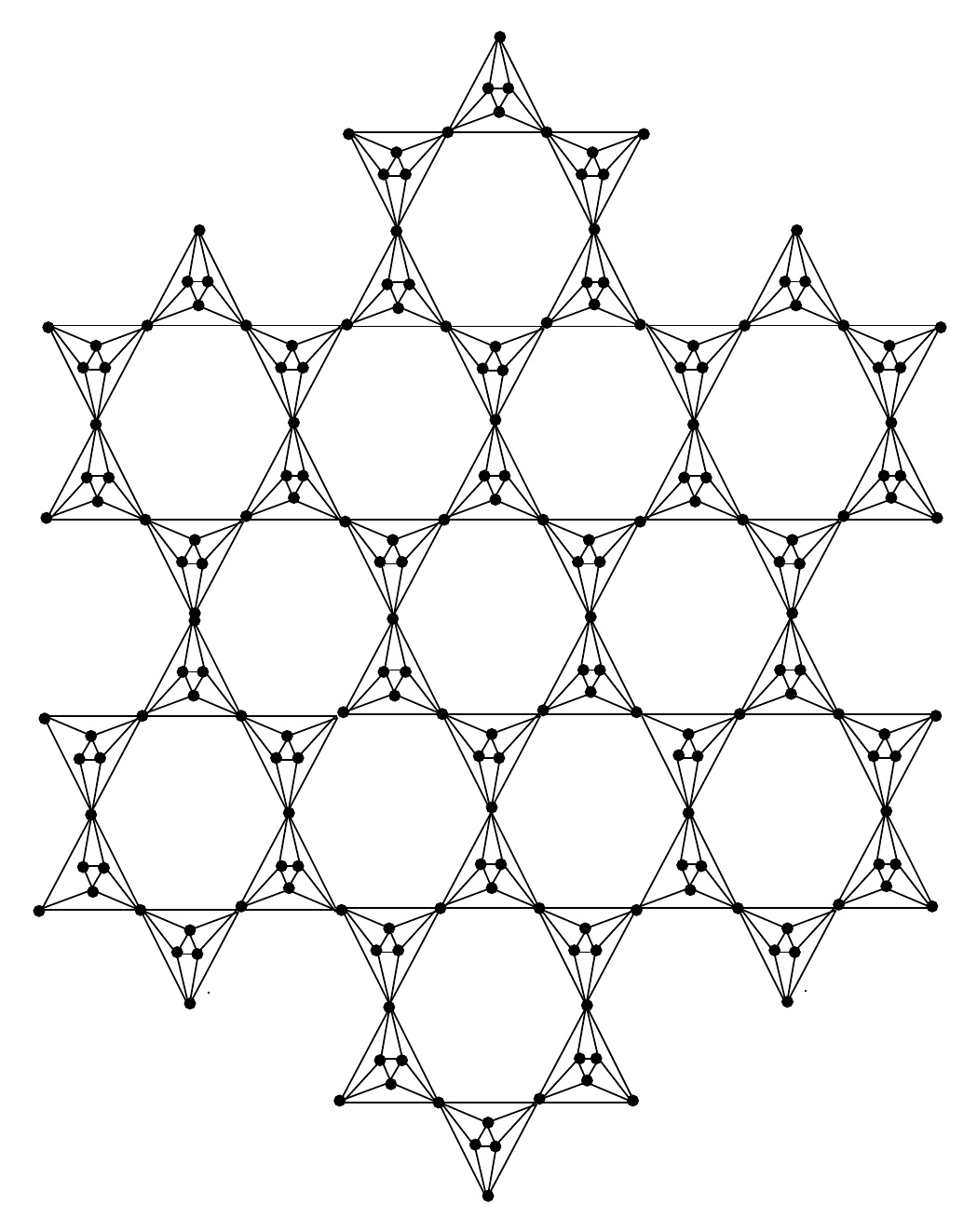}\\
\caption{Dominating Planar Octahedron network $DPOH(2)$.}\label{fig 3}
\end{figure}
In this article, $G$ is considered to be a network with vertex set $V(G)$ and edge set $E(G)$, the $deg(u)$ is the degree of vertex $u\in V(G)$ and
$S_{u}={\sum\limits_{v\in N_{G}(u)}deg(v)}$ where $N_{G}(u)=\{v\in V(G)\mid uv \in E(G)\}$. The notations used in this article are mainly taken from
the books \cite{Diudea,Gutman}.

Let $G$ be a connected graph. Then the Wiener index of $G$ is defined as
 \begin{equation}
 W(G)=\frac{1}{2}\sum\limits_{(u,v)}d(u,v),
\end{equation}
where $(u,v)$ is any ordered pair of vertices in $G$ and $d(u,v)$ is $u-v$ geodesic.

The very first and oldest degree based topological index is \emph{Randi\'{c}} index \cite{Randic} denoted by $R_{-\frac{1}{2}}(G)$ and was introduced
by Milan Randi\'{c} and is defined as
\begin{equation}
 R_{-\frac{1}{2}}(G)=\sum\limits_{uv \in E(G)}\frac{1}{\sqrt{deg(u)deg(v)}}.
\end{equation}
The general Randi\'{c} index  $R_{\alpha}(G)$ is the sum of $(deg(u)deg(v))^{\alpha}$ over all the edges $e=uv\in E(G) $ and is defined as
\begin{equation}
 R_{\alpha}(G)=\sum\limits_{uv \in E(G)}(deg(u)deg(v))^{\alpha}\,\,\ \text{for}\,\,\ \alpha = 1, \frac{1}{2}, -1, -\frac{1}{2}.
\end{equation}
An important topological index introduced by Ivan Gutman and Trinajsti\'{c} is the \emph{Zagreb} index denoted by $M_{1}(G)$ and is defined as
\begin{equation}
 M_{1}(G)=\sum_{uv \in E(G)}(deg(u)+deg(v)).
\end{equation}
One of the well-known degree based topological index is \emph{atom-bond connectivity} $(ABC)$ index introduced by Estrada \emph{et al.} in \cite{Estrada}
and defined as
\begin{equation}
ABC(G)=\sum_{uv \in E(G)}\sqrt{\frac{deg(u)+deg(v)-2}{deg(u)deg(v)}}.
\end{equation}
Another well-known connectivity topological descriptor is \emph{geometric-arithmetic} $(GA)$ index which was introduced
by Vuki\v{c}evi\'{c} \emph{et al.} in \cite{Vukičević} and defined as
\begin{equation}
 GA(G)= \sum_{uv \in E(G)} \frac{2\sqrt{deg(u)deg(v)}}{(deg(u)+deg(v))}.
\end{equation}
The fourth version of atom-bond connectivity index denoted by $ABC_{4}$ and fifth version of geometric-arithmetic index denoted by $GA_{5}$ can be computed if we are able to find the edge partition of these interconnection networks based on sum of the degrees of end vertices of each edge in these graphs. The fourth version of $ABC$ index is introduced by Ghorbani \emph{et al.} \cite{Graovac1} and defined as
\begin{equation}
ABC_{4}(G)=\sum_{uv \in E(G)} \sqrt{\frac{S_{u}+S_{v}-2}{S_{u}S_{v}}}.
\end{equation}
Recently, the fifth version of $GA$ index is proposed by Graovac \emph{et al.} \cite{Graovac2} and is defined as
\begin{equation}
GA_{5}(G)=\sum_{uv \in E(G)}\frac{2\sqrt{S_{u}S_{v}}}{(S_{u}+S_{v})}.
\end{equation}

\noindent The general Randi\'{c} index for $\alpha=1$ is the second Zagreb index for any graph $G$.

\section{Main results}

In this article, we study the general Randi\'{c}, first Zagreb, $ABC$, $GA$, $ABC_{4}$ and $GA_{5}$ indices and give exact results of these indices for Planar Octahedron $POH(n)$ network, Triangular Prism $ TP(n)$ network and Dominating Planar Octahedron $DPOH(n)$ network. Nowadays there is an extensive research activity on $ABC$ and $GA$ indices and their variants, for further study of topological indices of various graphs, \cite{Ali1,Ali2,Akhter1,Baig,Gao,Imran2,Simonraj,Shirdel,Wei}.

\subsection{Results for Planar Octahedron $ POH(n) $ networks}
In this section, we calculate certain degree based topological indices of Planar Octahedron  $POH(n)$ network. We compute general Randi\'{c} $R_{\alpha}(G)$ with $\alpha=\{1,-1,\frac{1}{2},-\frac{1}{2}\}$, $ABC$, $GA$, $ABC_{4}$ and $GA_{5}$ indices for $POH(n)$ network.

\begin{theorem}	
Consider the Planar Octahedron network $G\cong POH(n)$ for $n\in\mathbb{N}$. Then its general Randi\'{c} index is equal to
	\begin{equation*}
	R_{\alpha}(POH(n))=\left\{
	\begin{array}{ll}
	288n(-2+9n), & \hbox{$\alpha=1$;} \\
	24n(-2+(9+6\sqrt{2})n), & \hbox{$\alpha=\frac{1}{2}$;} \\
	\frac{9}{32}n(2+9n), & \hbox{$\alpha=-1$;}\\
	\frac{3}{4}n(2+(9+6\sqrt{2})n), & \hbox{$\alpha=-\frac{1}{2}$.}
	\end{array}
	\right.
	\end{equation*}
\end{theorem}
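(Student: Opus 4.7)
The plan is to attack this by the standard edge-partition method: determine the set of distinct $(\deg(u),\deg(v))$ pairs that appear on edges of $POH(n)$, count how many edges realize each pair, and then substitute into the definition of $R_\alpha$ in equation~(3) for each of the four values of $\alpha$.

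First I would analyze the structure of $POH(n)$ using the four-step drawing algorithm. In a silicate network $SL(n)$, the oxide vertices have degree $2$ or $4$ and the silicon vertices have degree $3$. After inserting a centroid vertex in each triangular face, joining it to the three oxide vertices of that face and to the two other centroids in the same silicate cell, and finally deleting all silicon vertices, each original oxide vertex picks up extra incidences from the centroids while losing its silicon neighbors. A direct local check of the construction shows that only two degrees survive in $POH(n)$, namely $4$ and $8$; I would verify this case-by-case for boundary oxides, interior oxides, and centroid vertices. Consequently the edges fall into exactly three partition classes $E_{4,4}$, $E_{4,8}$, $E_{8,8}$.

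The next (and main) step is to count $|E_{4,4}|$, $|E_{4,8}|$, $|E_{8,8}|$ as functions of the dimension $n$. I would do this by combining a layer-by-layer count with the hexagonal symmetry of the underlying honeycomb: the boundary contributes linear-in-$n$ corrections while interior cells each contribute a fixed number of edges of each type, giving quadratic leading terms. A small base case such as $POH(1)$ or $POH(2)$ (see Figure~\ref{fig 1}) provides a sanity check. The expected outcome of this bookkeeping is
\begin{equation*}
|E_{4,4}| = 18n^{2}+12n,\qquad |E_{4,8}| = 36n^{2},\qquad |E_{8,8}| = 18n^{2}-12n,
\end{equation*}
summing to $72n^{2}$ edges in total. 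The delicate part here will be tracking the boundary terms $\pm 12n$ without double-counting the vertices shared by adjacent silicate cells; this is where I expect most of the work and potential errors to lie.

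Once the edge partition is fixed, the theorem reduces to arithmetic. For each $\alpha \in \{1,\tfrac12,-1,-\tfrac12\}$ I would write
\begin{equation*}
R_{\alpha}(POH(n)) = 16^{\alpha}\,|E_{4,4}| + 32^{\alpha}\,|E_{4,8}| + 64^{\alpha}\,|E_{8,8}|,
\end{equation*}
plug in the three cardinalities above, and simplify. The cases $\alpha=\pm 1$ give purely rational expressions because $16^{\alpha}$, $32^{\alpha}$, $64^{\alpha}$ are rational, while the cases $\alpha=\pm\tfrac12$ pick up a $\sqrt{2}$ from $32^{\pm 1/2}=(4\sqrt{2})^{\pm 1}$, which is exactly the source of the $6\sqrt{2}\,n$ terms appearing in the stated formulas. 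Factoring out $n$ and collecting the $\sqrt{2}$ contribution from the $E_{4,8}$ class should then reproduce the four closed forms in the theorem statement verbatim.
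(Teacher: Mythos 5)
Your proposal follows essentially the same route as the paper: the edge partition you predict, $|E_{4,4}|=18n^{2}+12n$, $|E_{4,8}|=36n^{2}$, $|E_{8,8}|=18n^{2}-12n$, is exactly the paper's Table~1, and substituting it into $R_{\alpha}=16^{\alpha}|E_{4,4}|+32^{\alpha}|E_{4,8}|+64^{\alpha}|E_{8,8}|$ for $\alpha\in\{1,\tfrac12,-1,-\tfrac12\}$ is precisely the paper's computation. If anything, you offer more than the paper does, since you sketch how the partition would be derived from the construction whereas the paper simply asserts the table.
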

\begin{proof}
	Let $G$ be the Planar Octahedron network. The Planar Octahedron  network $POH(n)$ has $27n^2-21n$ vertices and the edge set of $POH(n)$ is divided into three partitions based on the degree of end vertices. Table 1 shows such an edge partition of $POH(n)$. Thus from $(3)$ is follows that,
	\begin{equation*}
	R_{\alpha}(G)=\sum\limits_{uv \in E(G)}(deg(u)deg(v))^{\alpha}.
	\end{equation*}
	\noindent \textbf {For $\alpha=1$}\\
	Now we apply the formula,
	\begin{equation*}
	R_{1}(G)=\sum_{uv \in E_j(G)} \sum_{j=1}^3deg{(u)} \cdot deg{(v)}.
	\end{equation*}
	By using edge partition given in Table 1, we get
	$$R_{1}(G)=16|E_1(POH(n))|+32|E_2(POH(n))|+64|E_3(POH(n))|,$$
	$$\Longrightarrow R_{1}(G)= 288n(-2+9n).$$
	\\
	\textbf{For $\alpha=\frac{1}{2}$}\\
	We apply the formula,
	\begin{equation*}
	R_{\frac{1}{2}}(G)=\sum_{uv \in E_j(G)}\sum_{j=1}^3\sqrt{deg(u) \cdot deg(v)}.
	\end{equation*}
	By using edge partition given in Table 1, we get
	$$R_{\frac{1}{2}}(G)=4|E_1(POH(n))|+4\sqrt{2}|E_2(POH(n))|+8|E_3(POH(n))|,$$
	$$\Longrightarrow R_{\frac{1}{2}}(G)= 24n(-2+(9+6\sqrt{2})n).$$
	\\
	\textbf{For $\alpha=-1$}\\
	We apply the formula,
	\begin{equation*}
	R_{-1}(G)=\sum_{uv \in E_j(G)}\sum_{j=1}^3\frac{1}{deg(u) \cdot deg(v)}.
	\end{equation*}
	
	$$R_{-1}(G)=\frac{1}{16}|E_1(POH(n))|+\frac{1}{32}|E_2(POH(n))|+\frac{1}{64}|E_3(POH(n))|,$$\\
By using edge partition given in Table 1, we get\\
	$$\Longrightarrow R_{-1}(G)=  \frac{9}{32}n(2+9n).$$
	\\
	\textbf{For $\alpha=-\frac{1}{2}$}\\
	We apply the formula,
	\begin{equation*}
	R_{-\frac{1}{2}}(G)=\sum_{uv \in E_j(G)}\sum_{j=1}^3\frac{1}{\sqrt{deg(u). \cdot deg(v)}}.
	\end{equation*}
	$$R_{-\frac{1}{2}}(G)=\frac{1}{4}|E_1(POH(n))|+\frac{1}4{\sqrt{2}}|E_2(POH(n))|+\frac{1}{8}|E_3(POH(n))|,$$\\
By using edge partition given in Table 1, we get\\
	\begin{equation*}
	\Longrightarrow R_{-\frac{1}{2}}(G)= \frac{3}{4}n(2+(9+6\sqrt{2})n).
	\end{equation*}
\end{proof}

In the following theorem, we compute first Zagreb index of Planar Octahedron  network $POH(n)$.
\begin{theorem}
	For Planar Octahedron network $G\cong POH(n)$ for $n\in\mathbb{N}$. Then its first Zagreb index is equal to
	\begin{equation*}
	M_{1}(POH(n))=96n(-1+9n).
	\end{equation*}
\end{theorem}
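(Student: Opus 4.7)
The plan is to mirror the strategy of Theorem~2.1.1 and reuse the three-class edge partition of $POH(n)$ recorded in Table~1. From the previous theorem we can already see that the partition is indexed by degree pairs $(4,4)$, $(4,8)$, $(8,8)$ since the corresponding products are $16$, $32$, $64$. So the first task is to read off the cardinalities $|E_1(POH(n))|$, $|E_2(POH(n))|$, $|E_3(POH(n))|$ from Table~1 (alternatively, one can back them out from the $R_1$ and $R_{1/2}$ expressions already proved, which gives $|E_1|=18n^2+12n$, $|E_2|=36n^2$, $|E_3|=18n^2-12n$).

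The next step is to apply the definition (4) directly. For an edge in $E_j$ the summand $\deg(u)+\deg(v)$ is constant, so
\begin{equation*}
M_1(POH(n)) = 8\,|E_1(POH(n))| + 12\,|E_2(POH(n))| + 16\,|E_3(POH(n))|.
\end{equation*}
Substituting the partition sizes and collecting the coefficients of $n^2$ and $n$ should yield $864n^2-96n = 96n(9n-1)$, matching the claimed formula.

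There is no genuine obstacle here beyond bookkeeping. The one place where a mistake is most likely is the edge-count step: one has to be careful to keep the three classes distinct and not to double-count boundary edges between $POH(n)$'s octahedral cells. Since the same partition already fed into the $R_\alpha$ computations and reproduced the announced answers, I would present the proof in a single short paragraph: invoke Table~1, write the three-term sum for $M_1$, and simplify. No delicate inequality or case analysis is needed.
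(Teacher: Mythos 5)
Your proposal is correct and follows exactly the paper's own argument: apply the definition of $M_{1}$ to the three-class edge partition of Table~1 with degree pairs $(4,4)$, $(4,8)$, $(8,8)$, giving $8|E_1|+12|E_2|+16|E_3|=864n^2-96n=96n(9n-1)$. The arithmetic with $|E_1|=18n^2+12n$, $|E_2|=36n^2$, $|E_3|=18n^2-12n$ checks out and matches the stated formula.
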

\begin{proof}
	Let $G$ be the Planar Octahedron network $POH(n)$. By using edge partition from Table 1, the result follows.
	\begin{equation*}
	M_{1}(POH(n))=\sum_{uv \in E(G)}(deg(u)+deg(v))=\sum_{uv \in E_j(G)}\sum_{j=1}^3(deg(u)+deg(v)).
	\end{equation*}
	$$M_{1}(POH(n))=8|E_1(POH(n))|+12|E_2(POH(n))|+16|E_3(POH(n))|,$$
	By doing some calculation, we get\\
	$$\Longrightarrow M_{1}(POH(n))=96n(-1+9n).$$
\end{proof}

Now, we compute $ABC$, $GA$, $ABC_{4}$ and $GA_{5}$ indices of Planar Octahedron  network $POH(n)$.
\begin{theorem}
	Let $D\cong POH(n)$ be the Planar Octahedron network, then\\ \\
	$\bullet$ $ABC(G)$ = $\frac{3}{4}n(4\sqrt{6}-2\sqrt{14}+3(4\sqrt{5}+2\sqrt{6}+\sqrt{14})n)$, for $n\in\mathbb{N}$ \\ \\
	$\bullet$ $GA(G)$ = $12(3+2\sqrt{2})n^{2}$, for $n\in\mathbb{N}$\\ \\
	$\bullet$ $ABC_4(G)$ = $\frac{1}{440}(264\sqrt{29}+88\sqrt{930}+24\sqrt{2255}+5280(-1+n)+
	60\sqrt{330}(-1+n)n+24\sqrt{3410}(-1+n)+165\sqrt{94}(-1+n)^{2}+528\sqrt{35}n+132\sqrt{38}n+330\sqrt{46}(-1+n)n+60\sqrt{86}(-3+2n)+220\sqrt{35}(2-5n+3n^{2}))$, for $n\geq2$\\ \\
	$\bullet$ $GA_5$=$8\sqrt{2}+6\sqrt{15}+\frac{8}{7}\sqrt{110}+\frac{48}{23}\sqrt{33}(-1+n)+\frac{3}{2}\sqrt{55}(-1+n)+\frac{96}{17}\sqrt{66}(-1+n)-36n+\frac{48}{11}\sqrt{30}n+36n^{2}+8\sqrt{2}(2-5n+3n^{2})$, for $n\geq2$
\end{theorem}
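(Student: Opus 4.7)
\medskip
\noindent\textbf{Proof proposal.}
My plan is to treat the four indices in two stages, because $ABC$ and $GA$ use the degree-based edge partition already exploited in Theorems 2.1.1 and 2.1.2, whereas $ABC_4$ and $GA_5$ require a finer partition based on the neighbor-degree sums $S_u$. For the first stage I would reuse Table~1, which the previous proofs reveal to have edge classes $E_1,E_2,E_3$ of endpoint-degree types $(4,4)$, $(4,8)$, $(8,8)$ (this is forced by the coefficients $16,32,64$ appearing in the $R_1$ computation). Substituting into definition~(5) gives the three per-edge contributions $\sqrt{6}/4$, $\sqrt{10}/(4\sqrt{2})$, $\sqrt{14}/8$, and into definition~(6) gives $1$, $4\sqrt{2}/6$, $1$; plugging in $|E_1|,|E_2|,|E_3|$ from Table~1 and simplifying should yield the stated $ABC$ and $GA$ formulas directly.

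For the second stage the main task is producing a new edge partition based on the pairs $(S_u,S_v)$. I would first compute $S_v$ for each vertex type by walking through the construction from the drawing algorithm: start from the silicate network of dimension $n$, insert the centroid vertices joined to oxide vertices of each triangular face, connect centroids within the same silicate cell, and delete the silicon vertices. This gives a small number of distinct local neighborhoods — interior oxide-type, boundary oxide-type, interior centroid-type, and centroid-type vertices adjacent to the outer face — each of which has a fixed neighbor-degree sum. I would then enumerate the edges by local type and count the number of edges of each $(S_u,S_v)$ pair as a polynomial in $n$, producing a new table (call it Table~2) analogous to Table~1.

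Once this $(S_u,S_v)$-partition is in hand, the $ABC_4$ and $GA_5$ values are obtained by plugging into definitions~(7) and~(8). The appearance of the radicals $\sqrt{29}, \sqrt{35}, \sqrt{38}, \sqrt{46}, \sqrt{86}, \sqrt{94}, \sqrt{330}, \sqrt{930}, \sqrt{2255}, \sqrt{3410}$ in the target formula is a useful sanity check: each of these corresponds to a specific $(S_u,S_v)$ pair, and matching these back pins down what the pairs must be (for instance $\sqrt{2255}$ comes from an $(S_u,S_v)$ with $S_uS_v=2255$, $\sqrt{29}$ from $S_u+S_v-2=29$, etc.), which can be used to cross-verify the structural computation. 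Terms linear in $n$ or independent of $n$ correspond to boundary edges, and the quadratic-in-$n$ terms to the interior bulk edges; the restriction $n\ge 2$ comes precisely from requiring the boundary and bulk regions to be disjoint so that the enumeration is valid.

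The main obstacle, in my view, is not the final algebraic simplification but the combinatorial step of correctly enumerating the $(S_u,S_v)$ edge classes and their multiplicities, because $POH(n)$ has several types of boundary vertices whose $S$-values differ from their interior counterparts, and miscounting even one class shifts coefficients of the $\sqrt{\cdot}$ terms. I would therefore verify the partition by checking that the class sizes sum to the total edge count $|E(POH(n))|$ obtained from Table~1, and by verifying the formula at $n=2$ against a direct computation on Figure~1 before asserting the general expression.
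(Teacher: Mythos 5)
Your proposal follows essentially the same route as the paper: it uses the degree-based edge partition (Table 1, with classes $(4,4)$, $(4,8)$, $(8,8)$) and the per-edge contributions $\sqrt{6}/4$, $\sqrt{5}/4$, $\sqrt{14}/8$ for $ABC$ and $1$, $2\sqrt{2}/3$, $1$ for $GA$, and then a finer partition by the neighbor-degree sums $(S_u,S_v)$ (the paper's Table 2, with twelve classes) substituted into definitions (7) and (8) for $ABC_4$ and $GA_5$. Your added steps of deriving the $(S_u,S_v)$ classes from the construction, checking that their sizes sum to $|E(POH(n))|$, and verifying at $n=2$ go beyond what the paper records (it simply asserts Table 2), but the underlying argument is the same.
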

\begin{proof}
	By using edge partition given in Table 1, we get the result. From $(5)$ it follows that,
	$$ABC(G)=\sum_{uv \in E(G)}\sqrt{\frac{deg(u)+deg(v)-2}{deg(u)\cdot deg(v)}}=\sum_{uv \in E_j(G)}\sum_{j=1}^3\sqrt{\frac{deg(u)+deg(v)-2}{deg(u)\cdot deg(v)}}.$$
	$$ABC(G)=\frac{\sqrt{6}}{4}|E_1(POH(n))|+\dfrac{\sqrt{5}}{4}|E_2(POH(n))|+\dfrac{\sqrt{14}}{8}|E_3(POH(n))|,$$
	By doing some calculation, we get\\
	$$\Longrightarrow ABC(G)=\frac{3}{4}n(4\sqrt{6}-2\sqrt{14}+3(4\sqrt{5}+2\sqrt{6}+\sqrt{14})n).$$
	from (6) we get
	$$GA(G)= \sum_{uv \in E(G)} \frac{2\sqrt{deg(u)deg(v)}}{(deg(u)+deg(v))}=\sum_{j=1}^3\sum_{uv \in E_j(G)}\frac{2\sqrt{deg(u)deg(v)}}{(deg(u)+deg(v))}.$$
	By doing some calculation, we get\\
	$$GA(G)=|E_1(POH(n))|+\frac{2\sqrt{2}}{3}|E_2(POH(n))|+|E_3(POH(n))|,$$
	$$\Longrightarrow GA = \frac{4}{3}n((9+6\sqrt{2})n+2\sqrt{2}-3)$$
	Let us consider an edge partition based on degree sum of neighbors of end vertices. Then the edge set $E(POH(n))$ can be divided into twelve edge partitions $E_j(POH(n)),4\leq j \leq15$. Table 2 shows such edge partitions.\\
	From (7) we get\\
	$$ABC_{4}(G)=\sum_{uv \in E(G)} \sqrt{\frac{S_{u}+S_{v}-2}{S_{u}S_{v}}}=\sum_{j=4}^{15}\sum_{uv \in E_j(G)}\sqrt{\frac{S_{u}+S_{v}-2}{S_{u}S_{v}}}.$$\\
	\begin{eqnarray*}
ABC_{4}(G)&=&\frac{\sqrt{38}}{20}|E_4(POH(n))|+\frac{\sqrt{35}}{20}|E_5(POH(n))|+\frac{\sqrt{29}}{20}|E_6(POH(n))|+\\&&0.2654|E_7(POH(n))|+
	\frac{\sqrt{46}}{4}|E_8(POH(n))|+\\&&0.2541|E_9(POH(n))|+\frac{1}{4}|E_{10}(POH(n))|+\\&&\frac{\sqrt{35}}{24}|E_{11}(POH(n))|+
	0.2518|E_{12}(POH(n))|+\\&&\frac{\sqrt{86}}{44}|E_{13}(POH(n))|+\frac{\sqrt{330}}{88}|E_{14}(POH(n))|+\\&&\frac{\sqrt{94}}{48}|E_{15}(POH(n))|
\end{eqnarray*}
	$\Longrightarrow ABC_4$ = $\frac{1}{440}(264\sqrt{29}+88\sqrt{930}+24\sqrt{2255}+5280(-1+n)+
	60\sqrt{330}(-1+n)n+24\sqrt{3410}(-1+n)+165\sqrt{94}(-1+n)^{2}+528\sqrt{35}n+132\sqrt{38}n+330\sqrt{46}(-1+n)n+60\sqrt{86}(-3+2n)+220\sqrt{35}()2-5n+3n^{2})$\\
	and from (8) we get\\
	$$GA_{5}(G)=\sum_{uv \in E(G)}\frac{2\sqrt{S_{u}S_{v}}}{(S_{u}+S_{v})}=\sum_{j=4}^{15}\sum_{uv \in E_j(G)}\frac{2\sqrt{S_{u}S_{v}}}{(S_{u}+S_{v})}.$$
	$GA_{5}(G)=2\bigg[|E_4(POH(n))|+\frac{2\sqrt{30}}{14}|E_5(POH(n))|+\frac{2\sqrt{2}}{3}|E_6(POH(n))|+\\\frac{\sqrt{55}}{8}|E_7(POH(n))|+
	|E_8(POH(n))|+\frac{\sqrt{15}}{4}|E_9(POH(n))|+\frac{2\sqrt{66}}{7}|E_{10}(POH(n))|+\frac{2\sqrt{2}}{3}|E_{11}(POH(n))|+
	\frac{2\sqrt{110}}{21}|E_{12}(POH(n))|+|E_{13}(POH(n))|+\frac{4\sqrt{33}}{23}|E_{14}(POH(n))|+|E_15(POH(n))|\bigg]$\\
	$\Longrightarrow GA_5$ = $8\sqrt{2}+6\sqrt{15}+\frac{8}{7}\sqrt{110}+\frac{48}{23}\sqrt{33}(-1+n)+\frac{3}{2}\sqrt{55}(-1+n)+\\ \frac{96}{17}\sqrt{66}(-1+n)-36n+\frac{48}{11}\sqrt{30}n+36n^{2}+8\sqrt{2}(2-5n+3n^{2})$.
\end{proof}

\subsection{Results for Triangular Prism network $TP(n)$}
In this section, we calculate certain degree based topological indices of Triangular Prism  network $TP(n)$ of dimension $n$. In the coming theorems we compute general Randi\'{c} index $R_{\alpha}(G)$ with $\alpha=\{1,-1,\frac{1}{2},-\frac{1}{2}\}$, $ABC$, $GA$, $ABC_{4}$ and $GA_{5}$ of $TP(n)$.
\begin{theorem}
	Consider the Triangular Prism TP(n) network $G\cong TP(n)$ for $n\in\mathbb{N}$. Then its general Randi\'{c} index is equal to
	\begin{equation*}
	R_{\alpha}(TP(n))=\left\{
	\begin{array}{ll}
	54n(-5+21n), & \hbox{$\alpha=1$;} \\
	18n(-3+\sqrt{2}+3(3+\sqrt{2})n), & \hbox{$\alpha=\frac{1}{2}$;} \\
	\frac{1}{6}n(4+21n), & \hbox{$\alpha=-1$;}\\
	n(\sqrt{2}+3(3+\sqrt{2})n), & \hbox{$\alpha=-\frac{1}{2}$.}
	\end{array}
	\right.
	\end{equation*}
\end{theorem}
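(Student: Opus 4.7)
The plan is to mirror the strategy used for $POH(n)$ in Theorem~2.1.1: identify the distinct vertex degrees that appear in the Triangular Prism network $TP(n)$, partition the edge set $E(TP(n))$ according to the ordered pair $\bigl(\deg(u),\deg(v)\bigr)$ of endpoint degrees, count each partition as a polynomial in $n$, and then plug the counts into the defining sum
\begin{equation*}
R_{\alpha}(TP(n)) \;=\; \sum_{j} \bigl(d_{1}^{(j)}\, d_{2}^{(j)}\bigr)^{\alpha}\,\bigl|E_{j}(TP(n))\bigr|
\end{equation*}
for each of $\alpha\in\{1,\tfrac{1}{2},-1,-\tfrac{1}{2}\}$.

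First I would read off $|V(TP(n))|$ and $|E(TP(n))|$ directly from the drawing algorithm, tracking how many new vertices and edges are added in each of the steps (silicate construction, insertion of centroids, linking within a cell, removal of silicon vertices, and the prism-style extension that produces $TP$). Then I would classify vertices by degree; the appearance of $\sqrt{2}$ in the $\alpha=\pm\tfrac12$ cases signals that $TP(n)$ contains an edge class whose degree product is twice a square, most plausibly a class with endpoint degrees $(2,4)$ contributing $\sqrt{8}=2\sqrt{2}$ per edge, alongside two other classes with perfect-square products (e.g.\ $(4,4)$ and/or $(6,6)$). This gives a compact Table~analogous to Table~1, listing each $|E_{j}(TP(n))|$ as a linear combination of $n$ and $n^{2}$.

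Once the edge partition is tabulated, each of the four evaluations is a one-line substitution. For $\alpha=1$ the sum $\sum d_{1}d_{2}\,|E_{j}|$ collapses to $54n(-5+21n)$; for $\alpha=\tfrac{1}{2}$ the $(2,4)$-class produces the $\sqrt{2}$ term and the remaining perfect-square classes contribute the rational part, yielding $18n(-3+\sqrt{2}+3(3+\sqrt{2})n)$; and the cases $\alpha=-1,-\tfrac{1}{2}$ are obtained by the same substitution with reciprocal weights. Algebraic simplification is then mechanical.

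The main obstacle is the combinatorial step of producing correct edge counts. Interior cells of $TP(n)$ contribute a fixed number of edges per degree class, but boundary cells behave differently because their vertices lose incident edges, which shifts their degrees down; this is what is responsible for the linear-in-$n$ corrections like $-5$ and $+4$ in the closed forms. The careful bookkeeping — classifying which vertices of the underlying silicate/prism construction sit on the outer boundary versus the interior, and ensuring each edge is counted exactly once across the three (or however many) classes — is where errors are most likely to creep in; everything after that is routine algebra.
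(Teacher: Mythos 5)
Your overall strategy is exactly the one the paper uses: partition $E(TP(n))$ by the pair of endpoint degrees, tabulate the sizes of the classes as polynomials in $n$, and substitute into $R_{\alpha}(G)=\sum_{j}(d_{1}^{(j)}d_{2}^{(j)})^{\alpha}|E_{j}|$ for each value of $\alpha$. The problem is that the one piece of genuinely substantive content --- the edge partition itself --- is both left underived and guessed incorrectly. The paper's Table~3 gives the classes $(3,3)$ with $18n^{2}+6n$ edges, $(3,6)$ with $18n^{2}+6n$ edges, and $(6,6)$ with $18n^{2}-12n$ edges; the $\sqrt{2}$ in the $\alpha=\pm\tfrac12$ cases comes from $\sqrt{3\cdot 6}=3\sqrt{2}$, not from a $(2,4)$ class contributing $2\sqrt{2}$. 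This is not a cosmetic difference: if you try to fit classes $(2,4)$, $(4,4)$, $(6,6)$ to the stated closed forms, matching the $\sqrt{2}$-part of the $\alpha=\tfrac12$ formula forces $|E_{(2,4)}|=27n^{2}+9n$, and then the $\alpha=1$ and $\alpha=\tfrac12$ formulas together force $|E_{(6,6)}|=\tfrac{45}{2}n^{2}-\tfrac{21}{2}n$, which is not an integer count. So your proposed partition cannot reproduce the theorem, and a proof built on it would fail at the very first substitution.

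To repair this you would need to actually carry out the degree analysis of $TP(n)$: the network inherits its degrees from the oxide-type vertices of the underlying silicate-style construction, which have degree $3$ (boundary/corner positions) or $6$ (fully shared positions), and the three edge classes are $(3,3)$, $(3,6)$, $(6,6)$ with the counts above. You correctly identify that the bookkeeping of boundary versus interior cells is where the real work lies, but your proposal stops short of doing that work and substitutes a heuristic guess that happens to be wrong. Everything downstream of the (correct) table is, as you say, mechanical; with the paper's Table~3 the four substitutions do yield $54n(-5+21n)$, $18n(-3+\sqrt{2}+3(3+\sqrt{2})n)$, $\tfrac{1}{6}n(4+21n)$, and $n(\sqrt{2}+3(3+\sqrt{2})n)$.
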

\begin{proof}
	Let $G$ be the Triangular Prism network. The Triangular Prism network $TP(n)$ has $27n^2+3n$ vertices and the set of edges $TP(n)$  divided into three partitions based on the degree of end vertices. Table 3 shows such an edge partition of $TP(n)$. Thus from $(3)$ is follows that\\
	\begin{equation*}
	R_{\alpha}(G)=\sum\limits_{uv \in E(G)}(deg(u)deg(v))^{\alpha}.
	\end{equation*}
	Now we apply the formula,
	\begin{equation*}
	R_{1}(G)=\sum_{uv \in E_j(G)}\sum_{j=1}^3deg{(u)} \cdot deg{(v)}
	\end{equation*}
	By using edge partition given in Table 3, we get
	$$R_{1}(G)=9|E_1(TP(n))|+18|E_2(TP(n))|+36|E_3(TP(n))|$$
	$$\Longrightarrow R_{1}(G)= 54n(-5+21n).$$
	\textbf{For $\alpha=\frac{1}{2}$}\\
	We apply the formula,
	\begin{equation*}
	R_{\frac{1}{2}}(G)=\sum_{uv \in E_j(G)}\sum_{j=1}^3\sqrt{deg(u) \cdot deg(v)}.
	\end{equation*}
	By using edge partition given in Table 3, we get
	$$R_{\frac{1}{2}}(G)=3|E_1(TP(n))|+3\sqrt{2}|E_2(TP(n))|+6|E_3(TP(n))|$$
	$$\Longrightarrow R_{\frac{1}{2}}(G)= 18n(-3+\sqrt{2}+3(3+\sqrt{2})n).$$
	\\
	\textbf{For $\alpha=-1$}\\
	We apply the formula,
	\begin{equation*}
	R_{-1}(G)=\sum_{uv \in E_j(G)}\sum_{j=1}^3\frac{1}{deg(u) \cdot deg(v)}.
	\end{equation*}
	
	$$R_{-1}(G)=\frac{1}{9}|E_1(TP(n))|+\frac{1}{18}|E_2(TP(n))|+\frac{1}{36}|E_3(TP(n))|$$
	$$\Longrightarrow R_{-1}(G)=  \frac{1}{6}n(4+21n).$$
	\\
	\textbf{For $\alpha=-\frac{1}{2}$}\\
	We apply the formula,
	\begin{equation*}
	R_{-\frac{1}{2}}(G)=\sum_{uv \in E_j(G)}\sum_{j=1}^3\frac{1}{\sqrt{deg(u) \cdot deg(v)}}.
	\end{equation*}
	$$R_{-\frac{1}{2}}(G)=\frac{1}{23}|E_1(TP(n))|+\frac{1}{3\sqrt{2}}|E_2(TP(n))|+\frac{1}{6}|E_3(TP(n))|$$
	\begin{equation*}
	\Longrightarrow R_{-\frac{1}{2}}(G)=  n(\sqrt{2}+3(3+\sqrt{2})n).
	\end{equation*}
\end{proof}

In the following theorem, we compute first Zagreb index of Triangular Prism network $TP(n)$.
\begin{theorem}
	For Triangular Prism network $G\cong TP(n)$ for $n\in\mathbb{N}$. Then its first Zagreb index is equal to
	\begin{equation*}
	M_{1}(TP(n))=54n(-1+9n).
	\end{equation*}
\end{theorem}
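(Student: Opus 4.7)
The plan is to apply the definition of the first Zagreb index directly and split the edge sum according to the partition of $E(TP(n))$ that was already fixed in Table~3 and used in the preceding theorem on the general Randi\'c index. Inspecting the coefficients in the earlier computation of $R_1(TP(n))$ (namely $9,18,36$), the three classes $E_1,E_2,E_3$ correspond to end-vertex degree pairs $(3,3)$, $(3,6)$, $(6,6)$, so the associated degree sums are $6$, $9$, and $12$ respectively.

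The first step is to write
\begin{equation*}
M_1(TP(n)) = \sum_{j=1}^{3}\sum_{uv\in E_j(G)} \bigl(\deg(u)+\deg(v)\bigr) = 6|E_1(TP(n))| + 9|E_2(TP(n))| + 12|E_3(TP(n))|.
\end{equation*}
The second step is to read the three cardinalities off Table~3. As a consistency check, one can also recover them algebraically from the values of $R_1(TP(n))$ and $R_{1/2}(TP(n))$ already established: splitting those two identities into their rational and $\sqrt{2}$ parts gives $|E_2|=6n(1+3n)$, and then $|E_1|=6n(1+3n)$ and $|E_3|=6n(3n-2)$.

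The third step is pure arithmetic: substituting into the displayed formula produces $6(6n+18n^2)+9(6n+18n^2)+12(18n^2-12n)=-54n+486n^2$, and factoring out $54n$ yields the asserted $54n(-1+9n)$.

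There is essentially no obstacle here; the nontrivial content is all contained in the edge partition of Table~3, which is taken as given from the construction of $TP(n)$ and has already been used to prove the general Randi\'c identities. The present theorem is then a one-line linear combination of the partition sizes, and the proof reduces to simplifying a quadratic polynomial in $n$.
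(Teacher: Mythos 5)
Your proposal is correct and follows essentially the same route as the paper: both apply the definition of $M_1$, split the sum over the three degree-based edge classes of Table~3 with coefficients $6$, $9$, $12$, and simplify to $486n^2-54n=54n(-1+9n)$. The extra consistency check recovering the class sizes from $R_1$ and $R_{1/2}$ is a harmless addition not present in the paper.
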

\begin{proof}
	Let $G$ be the Triangular Prism network $TP(n)$. By using edge partition from Table 3, the result follows. From (4) we have
	\begin{equation*}
	M_{1}(TP(n))=\sum_{uv \in E(G)}(deg(u)+deg(v))=\sum_{uv \in E_j(G)}\sum_{j=1}^3(deg(u)+deg(v)).
	\end{equation*}
	$$M_{1}(TP(n))=6|E_1(TP(n))|+9|E_2(TP(n))|+12|E_3(TP(n))|$$
	By doing some calculation, we get\\
	$$\Longrightarrow M_{1}(TP(n))=54n(-1+9n).$$
\end{proof}

Now, we compute $ABC$, $GA$, $ABC_{4}$ and $GA_{5}$ indices of Triangular Prism network $TP(n)$.
\begin{theorem}
	Let $G\cong TP(n)$ be the Triangular Prism network, then\\ \\
	$ABC(G)$ = $n(4-2\sqrt{10}+\sqrt{14}+3(4+\sqrt{10}+\sqrt{14})n)$, for $n\geq1$ \\ \\
	$GA(G)$ = $2n(-3+2\sqrt{2}+6(3+\sqrt{2})n)$, for $n\geq1$\\ \\
	$ABC_4(G)$ = $4-\frac{\sqrt{2}}{3}-\frac{4\sqrt{13}}{3}+\sqrt{\frac{74}{5}}+\sqrt{17}-\frac{5\sqrt{22}}{3}-\frac{4\sqrt{37}}{3}+\frac{3\sqrt{58}}{5}+\\  \frac{2}{45}(-225+60\sqrt{2}+20\sqrt{13}+15\sqrt{22}+30\sqrt{37}+15\sqrt{57}-\\ 27\sqrt{58}+3\sqrt{330})n+[6+3\sqrt{\frac{11}{2}}+\frac{3\sqrt{58}}{5}]n^{2}$, for $n\geq2$\\ \\
	$GA_5(G)$ = $-\frac{444}{13}+\frac{280\sqrt{2}}{17}-\frac{36\sqrt{5}}{7}+\frac{5760\sqrt{10}}{1729}+[-\frac{24}{13}+\frac{48\sqrt{3}}{7}+\frac{36\sqrt{5}}{7}-\\ \frac{636\sqrt{10}}{133}+\frac{3\sqrt{15}}{2}]n+\frac{36}{7}(7+\sqrt{10})n^{2}$, for $n\geq2$
\end{theorem}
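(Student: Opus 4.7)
The plan is to mimic the approach already employed for $POH(n)$ in Theorem 2.1.3, adapting it to the Triangular Prism $TP(n)$. Since the degree-based edge partition of $TP(n)$ has only three classes (given in Table 3 and already used in the preceding two theorems for this network), the indices $ABC(G)$ and $GA(G)$ reduce to finite sums of three terms each, one per edge class $E_1,E_2,E_3$. Concretely, I first substitute the degree pairs $(3,3)$, $(3,6)$ and $(6,6)$ of the three edge classes into the defining sums
\begin{equation*}
ABC(G)=\sum_{uv\in E(G)}\sqrt{\tfrac{deg(u)+deg(v)-2}{deg(u)\,deg(v)}},\qquad GA(G)=\sum_{uv\in E(G)}\tfrac{2\sqrt{deg(u)\,deg(v)}}{deg(u)+deg(v)},
\end{equation*}
to obtain per-class contributions $\tfrac{2}{3}, \tfrac{\sqrt{7}}{3\sqrt{2}}, \tfrac{\sqrt{10}}{6}$ for $ABC$ and $1, \tfrac{2\sqrt{2}}{3}, 1$ for $GA$; then I plug in $|E_1|, |E_2|, |E_3|$ from Table 3 and simplify, which should yield exactly the stated closed forms. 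This half of the proof is purely mechanical.

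For $ABC_4(G)$ and $GA_5(G)$, the strategy is to construct a refined edge partition based on the pair $(S_u,S_v)$ of neighbor-degree sums at the endpoints of each edge, analogous to Table 2 used for $POH(n)$. I would first classify vertices of $TP(n)$ by local neighborhood type (core triangular-prism vertices versus various boundary vertices arising from the dimension-$n$ truncation), compute $S_u$ for each type, and then enumerate which $(S_u,S_v)$-pairs actually occur on edges together with their cardinalities as polynomials in $n$. Substituting the resulting table into
\begin{equation*}
ABC_4(G)=\sum_{j}\sum_{uv\in E_j(G)}\sqrt{\tfrac{S_u+S_v-2}{S_uS_v}},\qquad GA_5(G)=\sum_{j}\sum_{uv\in E_j(G)}\tfrac{2\sqrt{S_uS_v}}{S_u+S_v},
\end{equation*}
and collecting terms in $1,n,n^2$ is what should produce the stated expressions. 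The reason a hypothesis $n\geq 2$ is required is precisely that certain neighbor-sum classes only appear once the network is large enough for the boundary region to separate from the core.

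The principal obstacle is the $(S_u,S_v)$ edge partition: identifying every distinct neighbor-sum class in $TP(n)$ and counting its occurrences requires a careful case analysis on the boundary of the construction, and it is the step most prone to undercounting. I would cross-check the partition by verifying that the cardinalities sum to the total edge count $|E(TP(n))|=3|E_1|+\cdots$ derivable from Theorem 2.2.1 (since $R_1+\text{linear combinations}$ of $|E_j|$ are known), and by testing $n=2$ and $n=3$ against direct enumeration on the corresponding graph. Once the partition is validated, the remainder is the same bookkeeping used in Theorem 2.1.3: simplify the irrational coefficients $\sqrt{2}/3$, $\sqrt{30}/7$, $\sqrt{5}/7$, $\sqrt{10}/\cdots$, etc., and collect in powers of $n$ to reach the displayed formulas. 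No step requires any tool beyond the edge-partition method already established in the preceding subsection.
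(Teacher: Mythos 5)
Your proposal follows essentially the same route as the paper: the $ABC$ and $GA$ values are obtained by substituting the degree pairs $(3,3)$, $(3,6)$, $(6,6)$ into the defining sums and weighting by the Table~3 edge counts (your coefficient $\sqrt{7}/(3\sqrt{2})$ is the paper's $\sqrt{14}/6$), and $ABC_4$, $GA_5$ are obtained from a twelve-class $(S_u,S_v)$ edge partition exactly as the paper enumerates in its proof. The only addition is your proposed cross-checks on the partition cardinalities, which the paper does not perform but which do not change the argument.
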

\begin{proof}
	By using edge partition given in Table 3 , we get the result. From $(5)$ it follows that
	$$ABC(G)=\sum_{uv \in E(G)}\sqrt{\frac{deg(u)+deg(v)-2}{deg(u)\cdot deg(v)}}=\sum_{uv \in E_j(G)}\sum_{j=1}^3\sqrt{\frac{deg(u)+deg(v)-2}{deg(u)\cdot deg(v)}}.$$
	$$ABC(G)=\frac{2}{3}|E_1(TP(n))|+\frac{\sqrt{14}}{6}|E_2(TP(n))|+\frac{\sqrt{10}}{6}|E_3(TP(n))|$$
	By doing some calculation, we get\\
	$$\Longrightarrow ABC(G)=n(4-2\sqrt{10}+\sqrt{14}+3(4+\sqrt{10}+\sqrt{14})n).$$
	from (6) we get
	$$GA(G)= \sum_{uv \in E(G)} \frac{2\sqrt{deg(u)deg(v)}}{(deg(u)+deg(v))}=\sum_{uv \in E_j(G)}\sum_{j=1}^3\frac{2\sqrt{deg(u)deg(v)}}{(deg(u)+deg(v))}.$$
	By doing some calculation, we get\\
	$$GA(G)=|E_1(TP(n))|+\frac{2\sqrt{2}}{3}|E_2(TP(n))|+|E_3(TP(n))|$$
	$$\Longrightarrow GA =2n(-3+2\sqrt{2}+6(3+\sqrt{2})n).$$
	Let us consider an edge partition based on degree sum of neighbors of end vertices. The edge set $E(TP(n))$ divided into twelve edge partitions $E_j(TP(n)), \\4\leq j \leq15$, where the edge partition $E_4(TP(n))$ contains $12n$ edges $uv$ with $S_u=9 and S_v=12$, the edge partition $E_5(TP(n))$ contains $6n$ edges $uv$ with $S_u=9$ and $S_v=15$, the edge partition $E_6(TP(n))$ contains $18n^{2}-12$ edges $uv$ with $S_u=S_v=12$, the edge partition $E_7(TP(n))$ contains $12$ edges $uv$ with $S_u=12$ and $S_v=24$, the edge partition $E_8(TP(n))$ contains $24n-24$ edges $uv$ with $S_u=12$ and $S_v=27$, the edge partition $E_9(TP(n))$ contains $18n^2-30n+12$ edges $uv$ with $S_u=12$ and $S_v=30$, the edge partition $E_{10}(TP(n))$ contains $12$ edges $uv$ with $S_u=15$ and $S_v=24$, the edge partition $E_{11}(TP(n))$ contains $12n-12$ edges $uv$ with $S_u=15$ and $S_v=27$, the edge partition $E_{12}(TP(n))$ contains $12$ edges $uv$ with $S_u=24$ and $S_v=27$, the edge partition $E_{13}(TP(n))$ contains $12n-18$ edges $uv$ with $S_u=S_v=27$, the edge partition $E_{14}(TP(n))$ contains $12n-12$ edges $uv$ with $S_u=27$ and $S_v=30$ and the edge partition $E_{15}(TP(n))$ contains $18n^2-36n+18$ edges $uv$ with $S_u=S_v=30$.\\ From (7) we get
	$$ABC_{4}(G)=\sum_{uv \in E(G)} \sqrt{\frac{S_{u}+S_{v}-2}{S_{u}S_{v}}}=\sum_{j=4}^{15}\sum_{uv \in E_j(G)}\sqrt{\frac{S_{u}+S_{v}-2}{S_{u}S_{v}}}.$$
	$ABC_{4}(G)=\frac{\sqrt{57}}{18}|E_4(TP(n))|+\frac{\sqrt{330}}{45}|E_5(TP(n))|+\frac{\sqrt{22}}{12}|E_6(TP(n))|+\\\frac{\sqrt{17}}{12}|E_7(TP(n))|+
	\frac{\sqrt{37}}{18}|E_8(TP(n))|+\frac{1}{3}|E_9(TP(n))|+\frac{\sqrt{370}}{60}|E_{10}(TP(n))|+\frac{2\sqrt{2}}{9}|E_{11}(TP(n))|+
	\frac{7\sqrt{2}}{36}|E_{12}(TP(n))|+\frac{2\sqrt{13}}{27}|E_{13}(TP(n))|+\\\frac{\sqrt{22}}{18}|E_{14}(TP(n))|+
	\frac{\sqrt{58}}{30}|E_{15}(TP(n))|$\\
	$\Longrightarrow ABC_4$ = $4-\frac{\sqrt{2}}{3}-\frac{4\sqrt{13}}{3}+\sqrt{\frac{74}{5}}+\sqrt{17}-\frac{5\sqrt{22}}{3}-\frac{4\sqrt{37}}{3}+\frac{3\sqrt{58}}{5}+\\ \frac{2}{45}(-225+60\sqrt{2}+20\sqrt{13}+15\sqrt{22}+30\sqrt{37}+15\sqrt{57}-\\ 27\sqrt{58}+3\sqrt{330})n+[6+3\sqrt{\frac{11}{2}}+\frac{3\sqrt{58}}{5}]n^{2}$\\
	and from (8) we get\\
	$$GA_{5}(G)=\sum_{uv \in E(G)}\frac{2\sqrt{S_{u}S_{v}}}{(S_{u}+S_{v})}=\sum_{j=4}^{15}\sum_{uv \in E_j(G)}\frac{2\sqrt{S_{u}S_{v}}}{(S_{u}+S_{v})}.$$\\
	$GA_{5}(G)=\frac{4\sqrt{3}}{7}|E_4(TP(n))|+\frac{\sqrt{15}}{4}|E_5(TP(n))|+|E_6(TP(n))|+\\\frac{2\sqrt{2}}{3}|E_7(TP(n))|+
	\frac{12}{13}|E_8(TP(n))|+\frac{2\sqrt{10}}{7}|E_9(TP(n))|+\\\frac{4\sqrt{10}}{13}|E_{10}(TP(n))|+\frac{3\sqrt{5}}{7}|E_{11}(TP(n))|
	+\frac{12\sqrt{2}}{17}|E_{12}(TP(n))|+\\|E_{13}(TP(n))|+\frac{6\sqrt{10}}{13}|E_{19}(TP(n))|+|E_{15}(TP(n))|$\\
	$\Longrightarrow GA_5$ = $-\frac{444}{13}+\frac{280\sqrt{2}}{17}-\frac{36\sqrt{5}}{7}+\frac{5760\sqrt{10}}{1729}+[-\frac{24}{13}+\frac{48\sqrt{3}}{7}+\frac{36\sqrt{5}}{7}-\\ \frac{636\sqrt{10}}{133}+\frac{3\sqrt{15}}{2}]n+\frac{36}{7}(7+\sqrt{10})n^{2}$.
\end{proof}


Now, we calculate certain degree based topological indices of Dominating Planar Octahedral  network $DPOH(n)$. In the coming theorems we compute general Randi\'{c} index $R_{\alpha}(G)$ with $\alpha=\{1,-1,\frac{1}{2},-\frac{1}{2}\}$, $ABC$, $GA$, $ABC_{4}$ and $GA_{5}$ of $DPOH(n)$.
\begin{theorem}
	Consider the Dominating Planar Octahedral network $G\cong DPOH(n)$ for $n\in\mathbb{N}$. Then its general Randi\'{c} index is equal to
	\begin{equation*}
	R_{\alpha}(RTSL(n))=\left\{
	\begin{array}{ll}
	288(11-31n+27n^{2}), & \hbox{$\alpha=1$;} \\
	24(11+6\sqrt{2}-(31+18\sqrt{2})n+9(3+2\sqrt{2})n^{2}), & \hbox{$\alpha=\frac{1}{2}$;} \\
	\frac{9}{32}(7-23n+27n^{2}), & \hbox{$\alpha=-1$;}\\
	\frac{3}{4}(7+6\sqrt{2}-(23+18\sqrt{2})n+9(3+2\sqrt{2})n^{2}), & \hbox{$\alpha=-\frac{1}{2}$.}
	\end{array}
	\right.
	\end{equation*}
\end{theorem}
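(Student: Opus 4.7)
The plan is to mirror the argument used for $POH(n)$ in the first theorem of the section, since the statement's structural form (degree contributions $16, 32, 64$ for $\alpha = 1$, and $\tfrac{1}{16}, \tfrac{1}{32}, \tfrac{1}{64}$ for $\alpha=-1$) signals that the endpoint degrees appearing in $DPOH(n)$ are again $\{4,8\}$, only with different partition cardinalities. So the first step is to describe $DPOH(n)$ explicitly enough to read off its degree sequence: every vertex is either a ``centroid'' vertex (degree $4$) or an ``oxide'' vertex (degree $8$, since in the dominating variant these vertices inherit adjacencies from both the octahedral layer and the surrounding structure). Once the degree set is $\{4,8\}$, the edge set partitions into exactly three classes $E_1, E_2, E_3$ corresponding to the degree pairs $(4,4), (4,8), (8,8)$.

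The core combinatorial step is counting $|E_j|$. I would do this by induction on $n$, or more cleanly by a direct layer-by-layer count built on the drawing algorithm: one tracks the number of interior silicate cells, boundary silicate cells, and corner contributions separately, and attaches the newly added edges into the appropriate class. Matching the $\alpha = 1$ and $\alpha = \tfrac12$ specializations in the claimed formula forces
\begin{equation*}
|E_1| = 6(9n^2 - 5n + 1), \qquad |E_2| = 36(3n^2 - 3n + 1), \qquad |E_3| = 6(9n^2 - 13n + 5),
\end{equation*}
and this is the target of the geometric count. As a sanity check one can verify $|E_1| + |E_2| + |E_3|$ matches the total edge count obtained independently by summing degrees over the vertex set.

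With the partition in hand, the remainder is bookkeeping. For each $\alpha \in \{1, \tfrac12, -1, -\tfrac12\}$ apply formula $(3)$ block by block:
\begin{equation*}
R_\alpha(DPOH(n)) = (16)^\alpha |E_1| + (32)^\alpha |E_2| + (64)^\alpha |E_3|,
\end{equation*}
and simplify. The $\alpha = 1$ and $\alpha = -1$ cases collapse to polynomials in $n$, producing the $288(27n^2 - 31n + 11)$ and $\tfrac{9}{32}(27n^2 - 23n + 7)$ expressions. The $\alpha = \pm\tfrac12$ cases split into a rational part (contributions from $E_1, E_3$) and a $\sqrt{2}$ part (contribution from $E_2$), matching the claimed form $24\bigl(11 + 6\sqrt{2} - (31+18\sqrt{2})n + 9(3+2\sqrt{2})n^2\bigr)$ and its reciprocal analogue.

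The step I expect to be the main obstacle is obtaining the three edge counts $|E_j|$ correctly from the drawing algorithm, because the ``dominating'' modification changes both interior and boundary adjacencies relative to $POH(n)$, and miscounting boundary edges is the easiest way to throw off a single linear coefficient without affecting the leading $n^2$ term. I would therefore verify the counts against small cases $n = 1, 2, 3$ by hand (or against the figure of $DPOH(2)$ provided) before committing to the symbolic simplification.
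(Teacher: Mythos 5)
Your proposal follows essentially the same route as the paper: both reduce the problem to the three-class edge partition by degree pairs $(4,4)$, $(4,8)$, $(8,8)$ with cardinalities $54n^{2}-30n+6$, $108n^{2}-108n+36$, $54n^{2}-78n+30$ (exactly the counts you reverse-engineered), and then evaluate $R_{\alpha}=16^{\alpha}|E_{1}|+32^{\alpha}|E_{2}|+64^{\alpha}|E_{3}|$ for each $\alpha$. The paper simply asserts this partition in its Table 5 without the geometric derivation you flag as the main obstacle, so your outline is, if anything, more explicit about where the real work lies.
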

\begin{proof}
	Let $G$ be the Dominating Planar Octahedral network. The Dominating Planar Octahedral network $DPOH(n)$ has $81^{2}-75n+24$ vertices. And the edges of $DPOH(n)$ has $216^{2}-216+72$. The edge set of $DPOH(n)$ is divided into three partitions based on the degree of end vertices. The first edge partition $E_1(DPOH(n))$ contains $54n^{2}-30n+6$ edges $uv$, where $deg(u)=deg(v)=4$. The second edge partition $E_2(DPOH(n))$ contains $108^{2}-108n+36$ edges $uv$, where $deg(u)=4$ and $deg(v)=8$. The third edge partition $E_3(DPOH(n))$ contains $54n^2-78n+30$ edges $uv$, where $deg(u)=deg(v)=8$. Table 5 shows such an edge partition of $DPOH(n)$. Thus from $(3)$ is follows that\\
	\begin{equation*}
	R_{\alpha}(G)=\sum\limits_{uv \in E(G)}(deg(u)deg(v))^{\alpha}.
	\end{equation*}
	Now we apply the formula,
	\begin{equation*}
	R_{1}(G)=\sum_{uv \in E_j(G)}\sum_{j=1}^3deg{(u)} \cdot deg{(v)}.
	\end{equation*}
	By using edge partition given in Table 5, we get
	$$R_{1}(G)=16|E_1(DPOH(n))|+32|E_2(DPOH(n))|+64|E_3(DPOH(n))|$$
	$$\Longrightarrow R_{1}(G)= 288(11-31n+27n^{2}).$$
	\textbf{For $\alpha=\frac{1}{2}$}\\
	We apply the formula,
	\begin{equation*}
	R_{\frac{1}{2}}(G)=\sum_{uv \in E_j(G)}\sum_{j=1}^3\sqrt{deg(u) \cdot deg(v)}.
	\end{equation*}
	By using edge partition given in Table 5, we get
	$$R_{\frac{1}{2}}(G)=4|E_1(DPOH(n))|+4\sqrt{2}|E_2(DPOH(n))|+8|E_3(DPOH(n))|$$
	$$\Longrightarrow R_{\frac{1}{2}}(G)=  24(11+6\sqrt{2}-(31+18\sqrt{2})n+9(3+2\sqrt{2})n^{2}).$$
	\\
	\textbf{For $\alpha=-1$}\\
	We apply the formula,
	\begin{equation*}
	R_{-1}(G)=\\sum_{uv \in E_j(G)}\sum_{j=1}^3\frac{1}{deg(u) \cdot deg(v)}.
	\end{equation*}
	
	$$R_{-1}(G)=\frac{1}{16}|E_1(DPOH(n))|+\frac{1}{32}|E_2(DPOH(n))|+\frac{1}{64}|E_3(DPOH(n))|$$
	$$\Longrightarrow R_{-1}(G)= \frac{9}{32}(7-23n+27n^{2}).$$
	\\
	\textbf{For $\alpha=-\frac{1}{2}$}\\
	We apply the formula,
	\begin{equation*}
	R_{-\frac{1}{2}}(G)=\sum_{uv \in E_j(G)}\sum_{j=1}^3\frac{1}{\sqrt{deg(u) \cdot deg(v)}}.
	\end{equation*}
	$$R_{-\frac{1}{2}}(G)=\frac{1}{4}|E_1(DPOH(n))|+\frac{1}{4\sqrt{2}}|E_2(DPOH(n))|+\frac{1}{8}|E_3(DPOH(n))|$$
	\begin{equation*}
	\Longrightarrow R_{-\frac{1}{2}}(G)=\frac{3}{4}(7+6\sqrt{2}-(23+18\sqrt{2})n+9(3+2\sqrt{2})n^{2}).
	\end{equation*}
\end{proof}

In the following theorem, we compute first Zagreb index of Dominating Planar Octahedral  network $DPOH(n)$.
\begin{theorem}
	For Dominating Planar Octahedral network $DPOH(n)$, the first Zagreb index is equal to
	\begin{equation*}
	M_{1}(DPOH(n))=96(10-29+27n^{2}).
	\end{equation*}
\end{theorem}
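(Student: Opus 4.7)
The plan is to reuse directly the edge partition of $DPOH(n)$ that was already established in the proof of the preceding theorem on the general Randi\'{c} index. That partition divides $E(DPOH(n))$ into three classes according to the degree pair of the endpoints: $E_1$ consists of $54n^2-30n+6$ edges of type $(4,4)$, $E_2$ consists of $108n^2-108n+36$ edges of type $(4,8)$, and $E_3$ consists of $54n^2-78n+30$ edges of type $(8,8)$.

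Given this partition, I would start from the definition
\begin{equation*}
M_1(G)=\sum_{uv \in E(G)}\bigl(deg(u)+deg(v)\bigr),
\end{equation*}
split the sum across the three classes $E_1,E_2,E_3$, and observe that each edge in $E_j$ contributes a constant weight: $4+4=8$ on $E_1$, $4+8=12$ on $E_2$, and $8+8=16$ on $E_3$. Thus
\begin{equation*}
M_1(DPOH(n))=8|E_1|+12|E_2|+16|E_3|.
\end{equation*}

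The remaining step is pure polynomial arithmetic: substitute the cardinalities and collect coefficients of $n^2$, $n$, and the constant. The three contributions are $8(54n^2-30n+6)$, $12(108n^2-108n+36)$, and $16(54n^2-78n+30)$, which sum to $2592n^2-2784n+960=96(27n^2-29n+10)$, matching the claimed expression (with the presumed typo $-29$ read as $-29n$).

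Since the edge partition has already been verified in the previous theorem, there is no real obstacle here beyond careful bookkeeping of the three polynomial contributions; the proof is essentially a two-line computation once the partition is invoked.
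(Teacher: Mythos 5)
Your proposal is correct and follows exactly the same route as the paper: invoke the degree-based edge partition of $DPOH(n)$ (Table 5), weight the three classes by $8$, $12$, $16$, and expand to get $2592n^2-2784n+960=96(27n^2-29n+10)$. You also rightly identified that the stated formula $96(10-29+27n^{2})$ contains a typo and should read $96(10-29n+27n^{2})$.
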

\begin{proof}
	Let $G$ be the Dominating Planar Octahedral network $DPOH(n)$. By using edge partition from Table 5, the result follows. From (4) we have
	\begin{equation*}
	M_{1}(DPOH(n))=\sum_{uv \in E(G)}(deg(u)+deg(v))=\sum_{uv \in E_j(G)}\sum_{j=1}^3(deg(u)+deg(v)).
	\end{equation*}
	$$M_{1}(DPOH(n))=8|E_1(DPOH(n))|+12|E_2(DPOH(n))|+16|E_3(DPOH(n))|$$
	By doing some calculation, we get\\
	$$\Longrightarrow M_{1}(DPOH(n))=96(10-29+27n^{2}).$$
\end{proof}
\qed
Now, we compute $ABC$, $GA$, $ABC_{4}$ and $GA_{5}$ indices of Dominating Planar Octahedral  network $DPOH(n)$.
\begin{theorem}
	Let $G\cong DPOH(n)$ be the  network, then\\ \\
	$ABC(G)$ = $\frac{1}{8}(72\sqrt{5}(1-3n+3n^{2})+6\sqrt{14}(5-13n+9n^{2})+12\sqrt{6}(1-5n+9n^{2}))$, for $n\geq1$ \\ \\
	$GA(G)$ = $12(3+2\sqrt{2})(1-3n+3n^{2})$, for $n\geq1$\\ \\
	$ABC_4(G)$ = $\frac{1}{440}(66\sqrt{78}+5280(-1+n)+120\sqrt{330}(-1+n)+\\ 24\sqrt{3410}(-1+n)+264\sqrt{29}n+88\sqrt{930}n+\\ 528\sqrt{35}(-1+2n)+132\sqrt{38}(-1+2n)+\\ 330\sqrt{46}(2-5n+3n^{2})+55\sqrt{94}(10-19n+9n^{2})+\\ 220\sqrt{35}(8-17n+9n^{2}))$, for $n\geq3$\\ \\
	$GA_5(G)$ =$96+\frac{96}{23}\sqrt{33}(-1+n)+\frac{3}{2}\sqrt{55}(-1+n)+\frac{96}{17}\sqrt{66}(-1+n)+\\ \frac{8}{7}\sqrt{110}(-1+n)-192n+8\sqrt{2}n+108n^{2}+\ \frac{48}{11}\sqrt{30}(-1+2n)+8\sqrt{2}(8-17n+9n^{2})$, for $n\geq3$
\end{theorem}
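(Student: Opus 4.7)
The plan is to follow exactly the same template used for $POH(n)$ and $TP(n)$ in the previous theorems. For the $ABC$ and $GA$ parts I would reuse the degree-based edge partition stated in the proof of the preceding theorem (Table 5 of the paper): $|E_1|=54n^{2}-30n+6$ edges of type $(4,4)$, $|E_2|=108n^{2}-108n+36$ edges of type $(4,8)$, and $|E_3|=54n^{2}-78n+30$ edges of type $(8,8)$. Substituting these degrees into formula $(5)$ gives per-edge contributions $\tfrac{\sqrt{6}}{4}$, $\tfrac{\sqrt{5}}{4}$, $\tfrac{\sqrt{14}}{8}$, so that
\begin{equation*}
ABC(G)=\tfrac{\sqrt{6}}{4}|E_1|+\tfrac{\sqrt{5}}{4}|E_2|+\tfrac{\sqrt{14}}{8}|E_3|.
\end{equation*}
Collecting the $\sqrt{6}$, $\sqrt{5}$ and $\sqrt{14}$ terms in powers of $n$ yields the three quadratic factors $1-5n+9n^{2}$, $1-3n+3n^{2}$ and $5-13n+9n^{2}$ appearing in the stated formula. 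Analogously, formula $(6)$ gives per-edge contributions $1$, $\tfrac{2\sqrt{2}}{3}$, $1$, so $GA(G)=|E_1|+\tfrac{2\sqrt{2}}{3}|E_2|+|E_3|$, and a direct sum gives $12(3+2\sqrt{2})(1-3n+3n^{2})$.

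For the $ABC_{4}$ and $GA_{5}$ parts the routine calculation is the same as in the $POH(n)$ case: substitute each $(S_{u},S_{v})$ pair into $(7)$ and $(8)$ and sum. The non-routine step, and the one I would spend most effort on, is producing the correct neighbor-sum edge partition of $DPOH(n)$. I would proceed by examining the drawing construction: $DPOH(n)$ differs from $POH(n)$ only through the extra dominating/boundary layer, so every degree-$4$ vertex and every degree-$8$ vertex that lies strictly in the interior keeps the same $S_{u}$-value as in $POH(n)$, while the border vertices acquire shifted neighbor sums. I would classify edges into the twelve types indexed $E_{4},\dots,E_{15}$ used in the $POH$ proof, counting interior edges via a quadratic-in-$n$ formula and boundary edges via linear/constant corrections.

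Once the partition is written down, plugging the cardinalities into
\begin{equation*}
ABC_{4}(G)=\sum_{j=4}^{15}\sum_{uv\in E_j(G)}\sqrt{\frac{S_{u}+S_{v}-2}{S_{u}S_{v}}},\qquad
GA_{5}(G)=\sum_{j=4}^{15}\sum_{uv\in E_j(G)}\frac{2\sqrt{S_{u}S_{v}}}{S_{u}+S_{v}}
\end{equation*}
and reusing the per-edge coefficients already computed in the $POH(n)$ theorem (the pairs $(S_u,S_v)$ are the same, so the irrational factors $\sqrt{38}/20$, $\sqrt{35}/20$, $\sqrt{29}/20$, $\sqrt{46}/4$, etc. for $ABC_4$, and $2\sqrt{30}/14$, $2\sqrt{2}/3$, $\sqrt{55}/8$, etc.\ for $GA_5$) reduces everything to a linear combination of polynomials in $n$. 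Grouping like radicals produces exactly the stated closed forms, and the condition $n\geq 3$ is there precisely to guarantee that all twelve edge classes are nonempty, which is also where the hypothesis would be verified.

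The main obstacle, as indicated, is the combinatorial enumeration of the neighbor-sum edge partition of $DPOH(n)$; once that table is in hand, the rest is mechanical algebra of the same kind carried out in the $POH(n)$ and $TP(n)$ proofs.
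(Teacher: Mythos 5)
Your treatment of $ABC(G)$ and $GA(G)$ is correct and coincides with the paper's: both use the degree-based partition $|E_1|=54n^{2}-30n+6$, $|E_2|=108n^{2}-108n+36$, $|E_3|=54n^{2}-78n+30$ with per-edge weights $\tfrac{\sqrt{6}}{4},\tfrac{\sqrt{5}}{4},\tfrac{\sqrt{14}}{8}$ and $1,\tfrac{2\sqrt{2}}{3},1$, and the algebra reproduces the two stated closed forms exactly.

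For $ABC_4$ and $GA_5$, however, there is a genuine gap. The entire substance of that half of the proof is the explicit neighbor-sum edge partition of $DPOH(n)$ into the twelve classes $E_4,\dots,E_{15}$ with their cardinalities, and you never produce it --- you only describe how you would go about finding it. The paper's proof consists precisely of writing this table down (e.g.\ $(20,20)$ with $12n-6$ edges, $(24,24)$ with $54n^{2}-90n+36$, $(24,48)$ with $108n^{2}-204n+96$, $(48,48)$ with $54n^{2}-114n+60$, etc.) and then summing; without those counts the stated closed forms cannot be derived or verified. Moreover, your shortcut of reusing the $(S_u,S_v)$ types and per-edge coefficients from the $POH(n)$ theorem wholesale is not legitimate: the two class lists are not the same. $DPOH(n)$ has a class with $S_u=S_v=40$ (six edges, which is exactly the source of the $\tfrac{66\sqrt{78}}{440}$ term in the stated $ABC_4$ formula, a radical that never appears for $POH(n)$), while the $(44,44)$ class present in $POH(n)$ does not occur in $DPOH(n)$; several shared classes also have different cardinalities (e.g.\ $(20,40)$ has $12$ edges in $POH(n)$ but $12n$ in $DPOH(n)$). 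So at least one new coefficient must be computed and one of the old ones discarded, and the enumeration must be redone from scratch rather than inherited.
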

\begin{proof}
	By using edge partition given in Table 5 , we get the result. From $(5)$ it follows that
	$$ABC(G)=\sum_{uv \in E(G)}\sqrt{\frac{deg(u)+deg(v)-2}{deg(u)\cdot deg(v)}}=\sum_{uv \in E_j(G)}\sum_{j=1}^3\sqrt{\frac{deg(u)+deg(v)-2}{deg(u)\cdot deg(v)}}.$$
	$$ABC(G)=\frac{\sqrt{6}}{4}|E_1(DPOH(n))|+\frac{\sqrt{5}}{4}|E_2(DPOH(n))|+\frac{\sqrt{14}}{8}|E_3(RTSL(n))|$$
	By doing some calculation, we get\\
	$$\Longrightarrow ABC(G)=\frac{1}{8}(72\sqrt{5}(1-3n+3n^{2})+6\sqrt{14}(5-13n+9n^{2})+12\sqrt{6}(1-5n+9n^{2}))$$
	from (6) we get
	$$GA(G)= \sum_{uv \in E(G)} \frac{2\sqrt{deg(u)deg(v)}}{(deg(u)+deg(v))}=\sum_{uv \in E_j(G)}\sum_{j=1}^3\frac{2\sqrt{deg(u)deg(v)}}{(deg(u)+deg(v))}.$$
	By doing some calculation, we get\\
	$$GA(G)=|E_1(DPOH(n))|+\frac{2\sqrt{2}}{3}|E_2(DPOH(n))|+|E_3(DPOH(n))|$$
	$$\Longrightarrow GA =12(3+2\sqrt{2})(1-3n+3n^{2}).$$
	Let us consider an edge partition based on degree sum of neighbors of end vertices. Then the edge set $E(DPOH(n))$ can be divided into twelve edge partitions $E_j(DPOH(n)),4\leq j \leq15$, where the edge partition $E_4(DPOH(n))$ contains $12n-6$ edges $uv$ with $S_u=S_v=20$, the edge partition $E_5(DPOH(n))$ contains $48n-24$ edges $uv$ with $S_u=20$ and $S_v=24$, the edge partition $E_6(DPOH(n))$ contains $12n$ edges $uv$ with $S_u=20$ and $S_v=40$, the edge partition $E_7(DPOH(n))$ contains $12n-12$ edges $uv$ with $S_u=20$ and $S_v=44$, the edge partition $E_8(DPOH(n))$ contains $54n^{2}-90n+36$ edges $uv$ with $S_u=24$ and $S_v=24$, the edge partition $E_9(DPOH(n))$ contains $24n$ edges $uv$ with $S_u=24$ and $S_v=40$, the edge partition $E_{10}(DPOH(n))$ contains $48n-48$ edges $uv$ with $S_u=24$ and $S_v=44$, the edge partition $E_{11}(DPOH(n))$ contains $108n^{2}-204n+96$ edges $uv$ with $S_u=24$ and $S_v=48$, the edge partition $E_{12}(DPOH(n))$ contains $6$ edges $uv$ with $S_u=40$ and $S_v=40$, the edge partition $E_{13}(DPOH(n))$ contains $12n-12$ edges $uv$ with $S_u=40 and S_v=44$  and the edge partition $E_{14}(DPOH(n))$ contains $24n-24$ edges $uv$ with $S_u=44$ and $S_v=48$ and the edge partition $E_{15}(DPOH(n)$ contains $54n^{2}-114n+60$ edges $uv$ with $S_u=S_v=48.\\ From  (7)  we   get , \\
	$$ABC_{4}(G)=\sum_{uv \in E(G)} \sqrt{\frac{S_{u}+S_{v}-2}{S_{u}S_{v}}}=\sum_{j=4}^{16}\sum_{uv \in E_j(G)}\sqrt{\frac{S_{u}+S_{v}-2}{S_{u}S_{v}}}$$\\
	$$ABC_{4}(G)$=$\frac{\sqrt{38}}{20}|E_4(DPOH(n))|+\frac{\sqrt{35}}{20}|E_5(DPOH(n))|+\frac{\sqrt{29}}{20}|E_6(DPOH(n))|+\\0.2654|E_7(DPOH(n))|+
	\frac{\sqrt{46}}{4}|E_8(DPOH(n))|+0.2541|E_9(DPOH(n))|+\frac{1}{4}|E_{10}(DPOH(n))|+\frac{\sqrt{35}}{24}|E_{11}(DPOH(n))|+
	0.2518|E_{12}(DPOH(n))|+\frac{\sqrt{86}}{44}|E_{13}(DPOH(n))|+\frac{\sqrt{330}}{88}|E_{14}(DPOH(n))|+\frac{\sqrt{94}}{48}|E_{15}(DPOH(n))|$\\
	$\Longrightarrow ABC_4$ = $ \frac{1}{440}(66\sqrt{78}+5280(-1+n)+120\sqrt{330}(-1+n)+\\ 24\sqrt{3410}(-1+n)+264\sqrt{29}n+88\sqrt{930}n+\\ 528\sqrt{35}(-1+2n)+132\sqrt{38}(-1+2n)+\\ 330\sqrt{46}(2-5n+3n^{2})+55\sqrt{94}(10-19n+9n^{2})+\\ 220\sqrt{35}(8-17n+9n^{2}))$\\
	and from (8) we get\\
	$$GA_{5}(G)=\sum_{uv \in E(G)}\frac{2\sqrt{S_{u}S_{v}}}{(S_{u}+S_{v})}=\sum_{j=4}^{15}\sum_{uv \in E_j(G)}\frac{2\sqrt{S_{u}S_{v}}}{(S_{u}+S_{v})}$$\\
	$GA_{5}(G)=|E_4(DPOH(n))|+\frac{2\sqrt{30}}{14}|E_5D(POH(n))|+\frac{2\sqrt{2}}{3}|E_6(DPOH(n))|+\\\frac{\sqrt{55}}{8}|E_7(DPOH(n))|+
	|E_8(DPOH(n))|+\frac{\sqrt{15}}{4}|E_9(DPOH(n))|+\frac{2\sqrt{66}}{7}|E_{10}(DPOH(n))|+\frac{2\sqrt{2}}{3}|E_{11}(DPOH(n))|+
	\frac{2\sqrt{110}}{21}|E_{12}(DPOH(n))|+|E_{13}(DPOH(n))|+\frac{4\sqrt{33}}{23}|E_{14}(DPOH(n))|+|E_15(DPOH(n))|$\\
	$\Longrightarrow GA_5$ = $96+\frac{96}{23}\sqrt{33}(-1+n)+\frac{3}{2}\sqrt{55}(-1+n)+\frac{96}{17}\sqrt{66}(-1+n)+\\ \frac{8}{7}\sqrt{110}(-1+n)-192n+8\sqrt{2}n+108n^{2}+\\ \frac{48}{11}\sqrt{30}(-1+2n)+8\sqrt{2}(8-17n+9n^{2})$.
\end{proof}

The Comparison graphs of $ABC$, $GA$, $ABC_{4}$ and $GA_{5}$ for Planar Octahedron network $POH(n)$ of dimension $n$, Triangular Prism network $TP(n)$ of dimension $n$ and Dominating Planar Octahedral network $DPOH(n)$ of dimension $n$ are shown in fig. $2$, fig. $4$ and fig. $6$ respectively.

\begin{table}
	\centering
	\begin{tabular}{|c|c|c|}
		\hline
		$(d_{u},d_{v})$ where $uv\in E(G)$ & $\text{Number of edges}$ \\\hline
		$(4,4)$ & $18n^{2}+12n$ \\\hline
		$(4,8)$ & $36n^2$ \\\hline
		$(8,8)$ & $18n^2-12n$\\
		\hline
	\end{tabular}
	\vspace{.2cm}
	\caption{Edge partition of Planar Octahedron network $POH(n)$ based on degrees of end vertices of each edge.}
\end{table}
\begin{table}
	\centering
	\begin{tabular}{|c|c|c|c|c|c|c|}
		\hline
		$(S_{u},S_{v})$ where $uv\in E(G)$ & $\text{Number of edges}$ \\\hline
		$(20,20)$ & $6n$ \\\hline
		$(20,24)$ & $24n$ \\\hline
		$(20,40)$ & $12$ \\\hline
		$(20,44)$ & $12n-12$ \\\hline
		$(24,24)$ & $18n^{2}-18n$ \\\hline
		$(24,40)$ & $24$ \\\hline
		$(24,44)$ & $48n-48$ \\\hline
		$(24,48)$ & $36n^{^{2}}-60n+24$ \\\hline
		$(40,44)$ & $12$ \\\hline
		$(44,44)$ & $12n-18$ \\\hline
		$(44,48)$ & $12n-12$ \\
		$(48,48)$ & $18n^{2}-36n+18$\\
		\hline
	\end{tabular}
	\vspace{.2cm}
	\caption{Edge partition of David Derived network $DD(n)$ based on degrees of end vertices of each edge.}
\end{table}
\begin{figure}
	\centering
	\caption{Comparison of $ABC$, $GA$, $ABC_{4}$ and $GA_{5}$ for $DD(n)$}\label{fig 1}
\end{figure}

\begin{table}
	\centering
	\begin{tabular}{|c|c|}
		\hline
		$(d_{u},d_{v})$ where $uv\in E(G)$ & $\text{Number of edges}$ \\\hline
		$(3,3)$ & $18n^{2}+6n$ \\\hline
		$(3,6)$ & $18n^2+6n$ \\\hline
		$(6,6)$ & $18n^2-12n$ \\
		\hline
	\end{tabular}
	\vspace{.2cm}
	\caption{Edge partition of Triangular Prism network $TP(n)$ based on degrees of end vertices of each edge.}
\end{table}
\begin{table}
	\centering
	\begin{tabular}{|c|c|}
		\hline
		$(S_{u},S_{v})$ where $uv\in E(G)$ & $\text{Number of edges}$ \\\hline
		$(9,12)$ & $12n$ \\\hline
		$(9,15)$ & $6n$ \\\hline
		$(12,12)$ & $8n^{2}-12$ \\\hline
		$(12,24)$ & $12$ \\\hline
		$(12,27)$ & $24n-24$ \\\hline
		$(12,30)$ & $18n^2-30n+12$ \\\hline
		$(15,24)$ & $12$ \\\hline
		$(15,27)$ & $12n-12$ \\\hline
		$(24,27)$ & $12$ \\\hline
		$(27,27)$ & $12n-18$ \\\hline
		$(27,30)$ & $12n-12$ \\\hline
		$(30,30)$ & $18n^2-36n+18$ \\
		\hline
	\end{tabular}
	\vspace{.2cm}
	\caption{Edge partition of Triangular Prism network $TP(n)$ based on degrees of end vertices of each edge.}\label{abc}
\end{table}

\begin{table}
	\centering
	\begin{tabular}{|c|c|}
		\hline
		$(d_{u},d_{v})$ where $uv\in E(G)$ & $\text{Number of edges}$ \\\hline
		$(4,4)$ & $54n^{2}-30n+6$ \\\hline
		$(4,8)$ & $108n^2-108n+36$ \\\hline
		$(8,8)$ & $54n^2-78n+30$ \\
		\hline
	\end{tabular}
	\vspace{.2cm}
	\caption{Edge partition of Dominating Planar Octahedral network $DPOH(n)$ based on degrees of end vertices of each edge.}
\end{table}
\begin{table}
	\centering
	\begin{tabular}{|c|c|}
		\hline
		$(S_{u},S_{v})$ where $uv\in E(G)$ & $\text{Number of edges}$ \\\hline
		$(20,20)$ & $12n-6$ \\\hline
		$(20,24)$ & $48n-24$ \\\hline
		$(20,40)$ & $12n$ \\\hline
		$(20,44)$ & $12n-12$ \\\hline
		$(24,24)$ & $54n^{2}-90+36n$ \\\hline
		$(24,40)$ & $24n$ \\\hline
		$(24,44)$ & $48n-48$ \\\hline
		$(24,48)$ & $108n^{^{2}}-204n+96$ \\\hline
		$(40,40)$ & $6$ \\\hline
		$(40,44)$ & $12n-12$ \\\hline
		$(44,48)$ & $24n-24$ \\
		$(48,48)$ & $54n^{2}-114n+60$\\
		\hline
	\end{tabular}
	\vspace{.2cm}
	\caption{Edge partition of Dominating Planar Octahedral network $DPOH(n)$ based on degrees of end vertices of each edge.}\label{abc}
\end{table}
The graphical representations of topological indices of these networks are depicted in Figure 5 and Figure 6 for certain values of $m$. By varying the different values of $m$, the graphs are increasing. These graphs shows the correctness of results.
\\
\noindent\textbf{Conclusion:}\\\\
In this paper, certain degree based topological indices, to be specific the Randi$\acute{c}$, Zagreb, $ABC$, $GA$, $ABC_4$ and $GA_5$ indices for planar octahedron networks just because and scientific shut equations for these systems were resolved which will help the general population working in system science to comprehend and investigate the basic topologies of these systems. Later on, we are intrigued to structure some new models/systems and after that review their topological records which will be very useful to comprehend their fundamental topologies.


\begin{thebibliography}{999}


















\bibitem{Ali1}
Ali, H.; Sajjad, A. On further results of hex derived networks. \emph{Open J. Discret. Appl. Math.} \textbf{2019}, \emph{2(1)}, 32-40.
\bibitem{Ali2}
Ali, H.; Binyamin, M. A.; Shafiq, M. K.; Gao, W. On the Degree-Based Topological Indices of Some Derived Networks. \emph{Mathematics} \textbf{2019}, \emph{7}, 612.
\bibitem{Akhter1}
Akhtar, S.; Imran, M. On molecular topological properties of benzenoid structures. Can. J. Chem. 2016 , 94, 687–698.
\bibitem{Baig}
Baig, A.Q.; Imran M.; Ali, H. On topological indices of poly oxide, poly silicate, DOX, and DSL networks.
Can. J. Chem. 2015, 93, 730–739.
\bibitem{Deza}
M. Deza, P. W. Fowler, A. Rassat, K. M. Rogers, Fullerenes as tiling of surfaces, J. Chem. Inf. Comput. Sci. $\textbf{40}(2000), 550-558$.
\bibitem{Diudea}
Diudea, M.V.; Gutman, I.; Lorentz, J. Molecular Topology; Babes-Bolyai University: Cluj-Napoca, Romania, 2001.
\bibitem{Estrada}
E. Estrada, L. Torres, L. Rodr\'{i}guez, I. Gutman, An atom-bond connectivity index: Modelling the enthalpy of formation of alkanes, Indian J. Chem. $\textbf{37A}(1998)$, $849 - 855$.
\bibitem{Furtula}
Furtula, B.; Gutman, I. Forgotten topological index. J. Math. Chem. 2015 , 53, doi:10.1007/s10910-015-0480-z.
\bibitem{Gao}
Gao, W.; Baig, A. Q.; Khalid, W.; Farahani, M. R. Molecular description of copper(II) oxide. Maced. J. Chem.
Chem. Eng. 2017, 36, 93–99.
\bibitem{Gutman}
Gutman, I.; Polansky, O.E. Mathematical Concepts in Organic Chemistry; Springer: Berlin, Germany, 1986.
\bibitem{Gutman2}
\bibitem{Graovac1}
M. Ghorbani, M. A. Hosseinzadeh, Computing $ABC_{4}$ index of nanostar dendrimers, Optoelectron. Adv. Mater. Rapid Commun.
$\textbf{4}(2010)$,  $1419 - 1422$.
\bibitem{Graovac2}
A. Graovac, M. Ghorbani, M. A. Hosseinzadeh, Computing fifth geometric-arithmetic index for nanostar dendrimers, J. Math. Nanosci.
$\textbf{1}(2011)$, $33 - 42$.
\bibitem{Imran2}
Imran, M.; Baig, A. Q.; Ali, H. On topological properties of dominating David derived networks, Can. J. Chem. 94, 137–148, 2016.
\bibitem{Randic}
M. Randi\'{c}, On Characterization of molecular branching, J. Amer. Chem. Soc., $\textbf{97}(1975)$, $6609-6615$.
\bibitem{Simonraj}
Simonraj, F.; George, A. Embedding of poly honeycomb networks and the metric dimension of star of david network, GRAPH-HOC, $4(2012), 11-28$.
\bibitem{Shirdel}
Shirdel, G.H.; Rezapour, H.; Sayadi, A.M. The hyper Zagreb index of graph operations. Iran. J. Math. Chem.
2013, 4, 213–220.
\bibitem{Vukičević}
D. Vuki\v{c}evi\'{c} B. Furtula, Topological index based on the ratios of geometrical and arithmetical means of end-vertex degrees of
edges, J. Math. Chem., $\textbf{46}(2009)$, $1369-1376$.
\bibitem{Wei}
Wei, C.C.; Ali, H.; Binyamin, M.A.; Naeem, M.N.; Liu, J.B. Computing Degree Based Topological Properties of
Third Type of Hex-Derived Networks. \emph{Mathematics} \textbf{2019}, \emph{7}, 368.
\bibitem{Wiener}
H. Wiener, Structural determination of paraffin boiling points, J. Amer. Chem. Soc., $\textbf{69}(1947)$, $17-20$.
\end{thebibliography}
\end{document}